\renewcommand{\paragraph}{\roman{paragraph}}
\newtheorem{thm}{\bfseries  Theorem}[section]
\newtheorem{lem}[thm]{\bfseries   Lemma}
\newtheorem{coro}[thm]{\bfseries   Corollary}
\newtheorem{con}[thm]{\bfseries   Conjecture}
\begin{document}
\title{\bf A new method for solving the equation $x^d+(x+1)^d=b$ in $\mathbb{F}_{q^4}$ where $d=q^3+q^2+q-1$
}
\author{Liqin Qian\thanks{Liqin Qian, School of Mathematical Sciences, Anhui University, Hefei, Anhui, 230601, China, {\tt qianliqin\_1108@163.com}}, Minjia Shi\thanks{Minjia Shi, School of Mathematical Sciences, Anhui University, Hefei, Anhui, 230601, China, {\tt smjwcl.good@163.com}}, Wei Lu\thanks{Wei Lu, School of Mathematics, Southeast University, Nanjing, Jiangsu, 210096, China, {\tt  luwei1010@139.com}}
}

\date{}
\maketitle
\begin{abstract} In this paper, we give a new method answer to a recent conjecture proposed by Budaghyan, Calderini, Carlet, Davidova and Kaleyski about the equation $x^d+(x+1)^d=b$ in $\mathbb{F}_{q^4}$, where $n$ is a positive integer, $q=2^n$ and $d=q^3+q^2+q-1$. In particular, we directly determine the differential spectrum of this power function $x^d$ using methods different from those in the literature. Compared with the methods in the literature, our method is more direct and simple.
\end{abstract}
{\bf Keywords:} Finite field, equation, power function, differential spectrum  \\
{\bf MSC(2010):} 11D04, 12E05, 12E12

\section{Introduction}
Let $n,m$ be two positive integers and $q=2^n$. Let $\mathbb{F}_{q}$ denote the finite field with $q$ elements.
An S-box is a vectorial Boolean function from $\mathbb{F}_{2^n}$ to $\mathbb{F}_{2^m}$, also called an $(n, m)$-function. The security of most modern block ciphers substantially relies on the cryptographic properties of their S-boxes, which are usually the only nonlinear components of these cryptosystems. In order to resist various kinds of cryptanalytic attacks, it is necessary to employ S-boxes with good cryptographic properties. One of the most powerful attacks against block ciphers is differential cryptanalysis.

In fact, for resistance of cryptosystem against differential attacks, S-boxes should have low differential uniformity.  The differential uniformity of a function $F$ was introduced by Nyberg \cite{N}, which as a measurement of the contribution of the function $F$ to the resistance of the block cipher against differential cryptanalysis. For any (finite) set $E$, $|E|$ denotes its cardinality. The multiplicative group of $\mathbb{F}_{2^n}$ will be denoted by $\mathbb{F}_{2^n}^*$. The \emph{differential uniformity} $\delta_F$ of an $(n, n)$-function $F$ is defined as
$$\delta_F=\max\{\delta_F(a,b): a\in \mathbb{F}_{2^n}^*,b\in \mathbb{F}_{2^n}\},$$
where $\delta_F(a,b)=|\{x\in \mathbb{F}_{2^n}: F(a+x)+F(x)=b\}|.$
 If $\delta_F=2$, then $F(x)$ is called an \emph{almost perfect nonlinear} (APN for short) function on $\mathbb{F}_{2^n}$. Since $a+x$ is a solution to $F(a+x)+F(x)=b$ whenever $x$ is, the differential uniformity $\delta_F$ must be even. Thus APN functions have the lowest possible differential uniformity and provide the best possible resistance to differential cryptanalysis.

In order to gain insight into the resistance of a cipher to differential attacks \cite{BS}, it is of interest to compute the differential spectra of power functions with low differential uniformity \cite{BCC,XY}. Let $F(x)=x^d$ be a power function over $\mathbb{F}_{2^n}$. Since $\delta_F(a,b)=\delta_F(1,\frac{b}{a^d})$ for $a\neq 0$. Let $w_i$ be defined as follows:
$$w_i=|\{b\in \mathbb{F}_{2^n}: \delta_F(1,b)=i\}|.$$ The \emph{differential spectrum} of $F(x)$ is the set $\mathbb{S}$ of $w_i$ ($i$ is even and $0\leq i \leq \delta_F$): 
$$\mathbb{S}=\{w_0,w_2,\ldots,w_{\delta_F}\}.$$
As a special class of functions over finite fields, power functions, have been extensively studied in the last decades due to their simple algebraic form and lower implementation cost in a hardware environment. Recently, Budaghyan \emph{et al.} \cite{BCCDK} presented some observations and computational data on the differential spectra of the power functions
\begin{equation}\label{eq17}
 F(x)=x^d~{\rm with}~d=\sum\limits_{i=1}^{k-1}2^{in}-1
\end{equation}
over $\mathbb{F}_{2^{nk}}$, where $n$ and $k$ are two positive integers. Note that this family of power functions includes some famous functions as special cases. For instance, when $n=1$, $F(x)=x^{2^{k}-3}$ coincides with the well-known inverse function; when $k=3$, $F(x)$ is cyclotomically equivalent to a Kasami type power function \cite{D1,HX,JW,K}; when $k=5$, $F(x)$ is the well-known Dobbertin function \cite{D}. For $k=2$, the differential properties have been investigated \cite{B,BCC1}, and the differential spectrum of the case $k=4$ was determined by Tu \cite{TWZTJ} and Kim \emph{et al.} \cite{KM}, which motivated
by the following conjecture proposed by Budaghyan \emph{et al.} based on the computational data.
\begin{con}\label{con1} \cite[Conjecture 27]{BCCDK} Let $d=q^3+q^2+q-1$ ($q=2^n$) and consider the power function $x^d$ over $\mathbb{F}_{q^4}$. Then the equation $x^d+(x+1)^d=b$ has $q^2$ solutions for one value of $b$; it has $q^2-q$ solutions for $q$ values of $b$; and has at most $2$ solutions for all  remaining points $b$.
\end{con}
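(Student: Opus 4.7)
My plan begins with the identity $x^{d+2} = x^{q^3+q^2+q+1} = N(x)$, where $N$ denotes the norm from $\mathbb{F}_{q^4}$ to $\mathbb{F}_q$, which gives $x^d = N(x)/x^2$ for $x \neq 0$. Multiplying $x^d+(x+1)^d=b$ by $x^2(x+1)^2$ and using $(x+1)^2=x^2+1$ in characteristic two, for $x \notin \{0,1\}$ the equation becomes
\[
 b\, x^4 + (b+u+v)\, x^2 + u = 0, \qquad u := N(x),\ v := N(x+1) \in \mathbb{F}_q, \qquad (\star)
\]
which, viewed as a quadratic in $y=x^2$, has at most two roots in $y$ and hence in $x$ for each fixed triple $(b,u,v)$. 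The proof then stratifies solutions by the location of $t := x^2+x$.

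For $x \in \mathbb{F}_{q^2}$, $x^{q^2}=x$ gives $x^d = x^{2q}$ and $(x+1)^d = x^{2q}+1$, so $f(x) := x^d+(x+1)^d = 1$; this accounts for the $q^2$ solutions of $b=1$. For $x \in \mathbb{F}_{q^4}\setminus\mathbb{F}_{q^2}$ with $t \in \mathbb{F}_{q^2}$, $x^{q^2}$ is the other root of $Y^2+Y+t=0$, i.e.\ $x^{q^2}=x+1$, whence $N(x)=N(x+1)=t^{q+1}$ and $f(x) = t^{q-1} \in \mu_{q+1}$ (the $(q+1)$-th roots of unity in $\mathbb{F}_{q^2}^{*}$). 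A coset analysis of $\mathbb{F}_q^{*}$ in $\mathbb{F}_{q^2}^{*}$ shows that the $q^2/2$ trace-one elements $t \in \mathbb{F}_{q^2}^{*}$ hit each $b \in \mu_{q+1}\setminus\{1\}$ exactly $q/2$ times under $t \mapsto t^{q-1}$, contributing $q$ solutions in $x$ per such $b$.

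The core case $t \notin \mathbb{F}_{q^2}$ is handled by setting $p := x\cdot x^{q^2}$ and $s := x+x^{q^2}$, both in $\mathbb{F}_{q^2}$, so that $N(x) = p^{q+1}$ and $N(x+1) = (p+s+1)^{q+1}$. When $b \in \mathbb{F}_{q^2}$, $(\star)$ is stable under the $q^2$-Frobenius, so $x^{q^2}$ is also a root; the at-most-two-roots bound forces the roots of $(\star)$ to be exactly $\{x,x^{q^2}\}$, and Vieta together with $u = p^{q+1}$ yields $u = bp^2$, hence $b = p^{q-1} \in \mu_{q+1}$. The further relation $u+v = b(s+1)^2$ reduces to $s = 1+\lambda p$ with $\lambda \in \mathbb{F}_q\setminus\{1\}$, and the irreducibility of $Y^2+sY+p$ over $\mathbb{F}_{q^2}$ (needed for $x \notin \mathbb{F}_{q^2}$) is the trace condition $\mathrm{Tr}_{\mathbb{F}_{q^2}/\mathbb{F}_2}(p/s^2) = 1$; via $\mathrm{Tr}_{\mathbb{F}_{q^2}/\mathbb{F}_2}=\mathrm{Tr}_{\mathbb{F}_q/\mathbb{F}_2}\circ\mathrm{Tr}_{\mathbb{F}_{q^2}/\mathbb{F}_q}$ and the parameters $m := p+p^q$, $n := p\, p^q \in \mathbb{F}_q$ this becomes an explicit $\mathbb{F}_q$-arithmetic condition. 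Counting valid pairs $(p,\lambda)$ gives $q(q-2)/2$ of them, supplying $q^2-2q$ additional solutions in $x$ for each $b \in \mu_{q+1}\setminus\{1\}$ (and none for $b=1$, since $p \in \mathbb{F}_q$ makes $\mathrm{Tr}_{\mathbb{F}_{q^2}/\mathbb{F}_2}(p/s^2)$ identically zero).

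The main obstacle is the remaining range $b \notin \mathbb{F}_{q^2}$, where the $q^2$-Frobenius argument fails since $b^{q^2} \neq b$. Here I would prove directly that $f^{-1}(b)$ contains at most one $\sigma$-orbit $\{x,x+1\}$: supposing two distinct orbits with norm pairs $(u_i,v_i)$, if $(u_1,v_1)=(u_2,v_2)$ then the two orbits share the biquadratic $(\star)$ and the Vieta relations $x_1x_2 = \sqrt{u/b}$, $(x_1+1)(x_2+1) = \sqrt{v/b}$ (combined with $N(x_i)=u$, $N(x_i+1)=v$) force $N(b)=1$; an additional constraint obtained from the $q$-Frobenius of the equivalent form $v = bx^2+b+u+u/x^2$ then rules out the remaining possibilities unless $b \in \mu_{q+1}$. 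When $(u_1,v_1) \neq (u_2,v_2)$, subtracting the two incarnations of $(\star)$ yields a rigid linear relation on $x_1^2, x_2^2$ that is incompatible with the $\mathbb{F}_q$-rationality of $(u_i,v_i)$. Pushing through this bookkeeping to a clean contradiction — marshalling the mixed $\mathbb{F}_q/\mathbb{F}_{q^4}$ arithmetic — is the most technically demanding step of the proof.
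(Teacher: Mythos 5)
Your opening reduction is sound and genuinely different from the paper's: you use $x^{d+2}=\mathbf{N}_q^{q^4}(x)$ to turn the equation into the biquadratic $(\star)$ and stratify by $t=x^2+x$, whereas the paper decomposes $\mathbb{F}_{q^4}^*=\mu_{q-1}\cdot\mu_{q+1}\cdot\mu_{q^2+1}$, writes $x^d=x_1^2x_2^{-2}x_3^{-2}$, and solves a $2\times 2$ linear system in $x_1x_3,\,y_1y_3$ by Cramer's rule. Your handling of $b\in\mathbb{F}_{q^2}$ is essentially correct: the stratum $x\in\mathbb{F}_{q^2}$ gives the $q^2$ solutions at $b=1$; the stratum $x^{q^2}=x+1$ gives $t^{q-1}\in\mu_{q+1}$ with the right fibre count; and for $t\notin\mathbb{F}_{q^2}$ the $q^2$-Frobenius stability of $(\star)$ correctly forces $b=p^{q-1}\in\mu_{q+1}$ (one small slip: the consistency relation for $\lambda=(s+1)/p$ is $(1+\lambda)(\lambda^{q-1}+1)=0$, i.e.\ $\lambda\in\mathbb{F}_q^*$, not $\lambda\in\mathbb{F}_q\setminus\{1\}$). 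This establishes the first two clauses of the conjecture and the vanishing at $b\in\mathbb{F}_{q^2}\setminus\mu_{q+1}$.

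The genuine gap is the third clause for $b\notin\mathbb{F}_{q^2}$ --- which is $q^4-q^2$ values of $b$ and the heart of the conjecture --- and your sketch there does not close. In the sub-case $(u_1,v_1)=(u_2,v_2)$ the Vieta/norm computation only yields $\mathbf{N}_q^{q^4}(b)=1$, a condition satisfied by $q^3+q^2+q+1$ values of $b$, most of them outside $\mathbb{F}_{q^2}$; the ``additional constraint from the $q$-Frobenius'' that is supposed to finish this branch is never exhibited, and it is not clear one exists in the form you describe. In the sub-case $(u_1,v_1)\neq(u_2,v_2)$, ``subtracting the two incarnations of $(\star)$'' does not produce a usable relation: the two incarnations are equations in the two \emph{different} unknowns $x_1$ and $x_2$ with different $\mathbb{F}_q$-coefficients, so their difference is a single equation in two unknowns over $\mathbb{F}_{q^4}$ and is in no sense rigid. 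What the paper does at the corresponding point is to pin down $x_1,x_3,y_1,y_3$ as explicit functions of $(x_2,y_2)$ via Cramer's rule, so that every solution is governed by one quadratic $(\mathfrak{n}+1)z^2+(\mathfrak{t}+\mathfrak{t}^q)z+(\mathfrak{n}^q+1)=0$ in $z=y_2^2$ whose two roots must be $x_2^2$ and $y_2^2$; the degree of that single ``master equation'' is what caps the count at $2$ uniformly in $b$ (and its solvability condition is what produces the set $\Lambda$ of size $\frac{1}{2}q^3(q-1)$). Your framework needs an analogue of such a master equation bounding the number of orbits $\{x,x+1\}$; without it the at-most-two claim is unproven.
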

\hspace*{-0.5cm}The recent work of Tu \emph{et al.} \cite{TWZTJ} gave the complete differential spectrum of $x^d~(d=q^3+q^2+q-1)$ and also presented an affirmative answer to \cite[Conjecture 27]{BCCDK}. Later, Kim and Mesnager \cite{KM} proposed a powerful approach to solve Conjecture \ref{con1} and and explicitly determined the set of $b$'s for which the equation $x^d+(x+1)^d=b$ has $i$ solutions for any positive integer $i$ simultaneously.

The purpose of this paper is to give a new method to solve Conjecture \ref{con1} and completely determine the differential spectrum of this power function $x^d~(d=q^3+q^2+q-1)$ using methods differ from \cite{KM, TWZTJ}.  The following
theorem is the main result of the paper.
\begin{thm}\label{thm} Let $b\in \mathbb{F}_{q^4}$ and $d=q^3+q^2+q-1$.
The number of solutions to the equation $x^d+(x+1)^d=b$ in $\mathbb{F}_{q^4}$ is equal to
\begin{equation*}
 \begin{aligned}
 \left\{
        \begin{array}{ll}
        q^2,~~~~~~b=1 ;\\
        q^2-q,~b\in \mu_{q+1}\backslash \{1\};\\
        2,~~~~~~~b\in \Lambda;\\
        0,~~~~~~~{\rm otherwise},\\
        \end{array}
  \right.
  \end{aligned}
\end{equation*}where $\Lambda$ is defined in Lemma \ref{lem7}.  Furthermore, the differential spectrum of $x^d$ is $$\mathbb{S}=\{w_0=(\frac{1}{2}q^3-1)(q+1),w_2=\frac{1}{2}q^3(q-1),w_{q^2}=1,w_{q^2-q}=q\}.$$
\end{thm}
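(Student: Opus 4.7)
The plan is to convert the exponential equation into a polynomial identity via the relative norm $N : \mathbb{F}_{q^4} \to \mathbb{F}_q$, and then to apply the Frobenius $x \mapsto x^q$ twice in succession to collapse the problem to a linear condition over $\mathbb{F}_{q^2}$. The starting observation is that $d+2 = q^3+q^2+q+1$, so $x^{d+2} = x \cdot x^q \cdot x^{q^2} \cdot x^{q^3} = N(x) \in \mathbb{F}_q$ for $x \ne 0$, giving $x^d = N(x)/x^2$. After checking by hand that $x \in \{0,1\}$ is a solution iff $b = 1$, the equation $x^d+(x+1)^d = b$ becomes, for all other $x$,
\[
(x+1)^2 N(x) + x^2 N(x+1) = b\, x^2 (x+1)^2. \qquad (\ast)
\]

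The case $x \in \mathbb{F}_{q^2}$ can be dispatched immediately: since $N(x) = (x^{q+1})^2$, one has $x^d = x^{2q}$ and $x^d+(x+1)^d = (x^q + x^q + 1)^2 = 1$, accounting for all $q^2$ solutions at $b = 1$.

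For $x \notin \mathbb{F}_q$ I would apply $\sigma$ to $(\ast)$, exploiting $N(x), N(x+1) \in \mathbb{F}_q$, to obtain a companion equation. The resulting $2 \times 2$ linear system in $(N(x), N(x+1))$ has determinant $(x+x^q)^2 \ne 0$, and in characteristic $2$ a square root can be extracted cleanly, giving
\[
\sqrt{N(x)} = \frac{x^{q+1}\bigl(b^{1/2}(x+1) + b^{q/2}(x^q+1)\bigr)}{x + x^q},
\]
an element that must lie in $\mathbb{F}_q$. Expressing the condition $\sqrt{N(x)}^q = \sqrt{N(x)}$ produces the symmetric identity
\[
b^{1/2} x(x+1)(x^q+x^{q^2}) + b^{q/2} x^q(x^q+1)(x+x^{q^2}) + b^{q^2/2} x^{q^2}(x^{q^2}+1)(x+x^q) = 0. \qquad (\dagger)
\]
Applying $\sigma^2$ to $(\dagger)$ and adding the result to $(\dagger)$ reduces the equation to $T^q S + T S^q = 0$, where $T := x + x^{q^2}$ and $S := b^{1/2} x(x+1) + b^{q^2/2} x^{q^2}(x^{q^2}+1)$ both lie in $\mathbb{F}_{q^2}$. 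Consequently either $T = 0$, recovering $x \in \mathbb{F}_{q^2}$, or $S/T \in \mathbb{F}_q$.

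The main obstacle is the analysis of the case $S = cT$ with $c \in \mathbb{F}_q$. Substituting this back into $(\ast)$ and invoking the explicit description of $\Lambda$ provided by Lemma \ref{lem7}, one must show that for $b \in \mu_{q+1}\setminus\{1\}$ this auxiliary condition produces exactly $q^2-q$ genuine solutions (organized via a specific Galois orbit structure in $\mathbb{F}_{q^4}\setminus\mathbb{F}_{q^2}$), that for $b \in \Lambda$ it produces exactly two, and that for every other $b$ it produces none. A final consistency check via $\sum_b \delta_F(1,b) = q^4$ pins down $|\Lambda| = q^3(q-1)/2$, and collecting all cases delivers the differential spectrum $\mathbb{S}$ as stated.
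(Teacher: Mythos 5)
Your reduction is set up correctly and takes a genuinely different route from the paper: instead of the unique decomposition $\mathbb{F}_{q^4}^*=\mu_{q-1}\cdot\mu_{q+1}\cdot\mu_{q^2+1}$ that the paper uses throughout (writing $x=x_1x_2x_3$, turning the system into a linear system in $x_1x_3,\,y_1y_3$ with coefficients in $\mu_{q+1}$, and landing on a quadratic over $\mu_{q+1}$), you work directly with the norm form $x^d=\mathbf{N}_q^{q^4}(x)/x^2$, Frobenius-conjugate the polynomial identity $(\ast)$, solve a $2\times 2$ system for $N(x),N(x+1)$, and collapse to $ST^q+TS^q=0$. I checked the identity $(\dagger)$, the determinant $(x+x^q)^2$, the observation that all of $\mathbb{F}_{q^2}$ maps to $b=1$, and the $\sigma^2$-symmetrization; these are all correct, and the consistency check $\sum_b\delta_F(1,b)=q^4$ is a legitimate way to get $|\Lambda|=\tfrac12 q^3(q-1)$ that would bypass the paper's Lemma \ref{lem7} entirely.

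However, there is a genuine gap: the entire core of the theorem is contained in the paragraph you label ``the main obstacle,'' and you do not carry it out. Concretely, (i) your chain $(\ast)\Rightarrow(\dagger)\Rightarrow ST^q+TS^q=0$ consists of necessary conditions only --- adding $\sigma^2(\dagger)$ to $(\dagger)$ loses information, and $(\dagger)$ itself only says the Cramer candidate for $\sqrt{N(x)}$ lies in $\mathbb{F}_q$, not that it actually equals the norm of $x$ --- so the reverse implications must be tracked to decide which $x$ with $S/T\in\mathbb{F}_q$ genuinely solve $(\ast)$; (ii) the claims that $b\in\mu_{q+1}\setminus\{1\}$ yields exactly $q^2-q$ solutions, that the remaining $b$ yield $0$ or $2$, and that the set yielding $2$ is exactly the $\Lambda$ of Lemma \ref{lem7} (i.e.\ the trace condition $\mathbf{Tr}_2^q(\mathfrak{n}'^{(q+1)/2}/\mathfrak{t}')=1$) are asserted, not derived from your substitution $S=cT$; and (iii) even the statement that $\mathbb{F}_{q^2}$ accounts for \emph{all} solutions at $b=1$ requires ruling out solutions with $T\neq 0$, which is part of the same missing analysis. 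The paper resolves exactly these points via its quadratic \eqref{eq13} in $z=y_2^2\in\mu_{q+1}$ and the counting Lemma \ref{lem7}; until you supply the analogous computation in your coordinates, the proposal is a plausible strategy rather than a proof.
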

\hspace*{-0.5cm}Compared with the methods in \cite{KM, TWZTJ}, the idea of our proof method is more direct and interesting. In particular, our method can directly determine the number of the set of $b$'s for which the
equation has 2 solutions.

This paper is organized as follows. In Section 2, we introduce notation and the necessary preliminaries required for the subsequent sections. In Section 3, we devote to the proof of our main result. Section 4 concludes the paper and elaborates our further research work.


\section{Preliminaries}
Let $r$ be a prime power and $m$ a positive integer. Define the trace $\mathbf{Tr}_{r}^{r^m}$ and norm $\mathbf{N}_r^{r^m}$ mappings from $\mathbb{F}_{r^m}$ to $\mathbb{F}_{r}$ by
$$ \mathbf{Tr}_{r}^{r^m}(x)=\sum_{i=0}^{m-1}x^{r^{i}},~\mathbf{N}_{r}^{r^m}(x)=\prod_{i=0}^{m-1}x^{r^{i}},$$ respectively.

Let $\mathbb{F}_{q^4}^*$ denote the multiplicative group of $\mathbb{F}_{q^4}$. For a positive integer $s$ with $s|(q^4-1)$, we define the set
$$\mu_s=\{x\in \mathbb{F}_{q^4}^*:x^s=1\},$$ which consists of all $s$-th roots of unity in $\mathbb{F}_{q^4}$. Note that $x^q=x$ if $x\in \mu_{q-1}(=\mathbb{F}_q^*)$, $x^q=x^{-1}$ if $x\in \mu_{q+1}$ and $x^{q^2}=x^{-1}$ if $x\in \mu_{q^2+1}$.

Next, we present below a decomposition of $\mathbb{F}_{q^4}^*$ that comes from \cite[Lemma 4]{KM}.
\begin{lem}\label{lem1} (\cite{KM}, Lemma 4) Let $n$ be a positive integer and $q=2^n$. $\mathbb{F}_{q^4}^*$ can be composed as $\mathbb{F}_{q^4}^*=\mu_{q-1}\cdot \mu_{q+1}\cdot \mu_{q^2+1}$. More precisely, for any $x\in \mathbb{F}_{q^4}^*$, there are unique elements $x_1\in\mu_{q-1}$, $x_2\in\mu_{q+1}$, $x_3\in\mu_{q^2+1}$ such that $x$ can be uniquely decomposed into the form
$$x=x_1x_2x_3.$$
\end{lem}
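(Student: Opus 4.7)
The plan is to deduce the lemma from two classical inputs: the multiplicative group $\mathbb{F}_{q^4}^*$ is cyclic of order $q^4-1$, and a finite cyclic group whose order factors into pairwise coprime integers decomposes uniquely as an internal direct product of its subgroups of those orders (an instance of the Chinese Remainder Theorem at the level of cyclic exponents).

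First I would verify that the three factors $s_1 = q-1$, $s_2 = q+1$, and $s_3 = q^2+1$ in $q^4-1 = s_1 s_2 s_3$ are pairwise coprime. Since $q = 2^n$ is even, each $s_i$ is odd; using $s_2 - s_1 = 2$ and $s_3 = s_1 s_2 + 2$, the Euclidean algorithm reduces every pairwise $\gcd$ to $\gcd(\cdot\,, 2)$, which equals $1$. I would then invoke the standard structure theorem: in a cyclic group $G$ of order $n$, for every divisor $d \mid n$ there is a unique subgroup of order $d$, namely $\{g \in G : g^d = 1\}$; when $n = n_1 n_2 n_3$ with the $n_i$ pairwise coprime, these subgroups intersect pairwise trivially (a common element has order dividing $\gcd(n_i,n_j) = 1$), and their internal product has order $n_1 n_2 n_3 = n$, forcing $G$ to be their internal direct product. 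Applied to $G = \mathbb{F}_{q^4}^*$ with $n_i = s_i$, the unique subgroup of order $s_i$ is exactly $\mu_{s_i}$, which yields the claimed decomposition together with the unique factorization $x = x_1 x_2 x_3$ for every $x \in \mathbb{F}_{q^4}^*$.

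For an explicit recipe, I would fix a primitive element $\alpha$ of $\mathbb{F}_{q^4}^*$ and use B\'ezout to pick integers $a_1, a_2, a_3$ with $a_1 s_2 s_3 + a_2 s_1 s_3 + a_3 s_1 s_2 \equiv 1 \pmod{q^4-1}$; then for $x = \alpha^k$ the required components are $x_i = \alpha^{k\,a_i s_j s_\ell}$ with $\{i,j,\ell\} = \{1,2,3\}$. A direct check gives $x_i \in \mu_{s_i}$ and $x_1 x_2 x_3 = x$, so both existence and the formula for the decomposition are explicit. There is no real obstacle in this proof: the coprimality of the $s_i$ is immediate in characteristic $2$, and the direct-product decomposition of a cyclic group along pairwise coprime factors is textbook material; the only thing to be careful about is writing the uniqueness argument cleanly, which again follows from the pairwise triviality of the intersections.
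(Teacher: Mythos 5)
Your proof is correct. Note that the paper does not actually prove this lemma---it is quoted verbatim from \cite{KM}, Lemma 4---so there is no internal argument to measure against; but your route is the standard and natural one, and the key points all check out: $q^4-1=(q-1)(q+1)(q^2+1)$ with the factors pairwise coprime (all odd since $q=2^n$, and the reductions $\gcd(q-1,q+1)\mid 2$ and $q^2+1=(q-1)(q+1)+2$ leave only $\gcd(\cdot,2)=1$), and a cyclic group is the internal direct product of its unique subgroups of pairwise coprime orders, which here are precisely the sets $\mu_{s_i}$. One point to write carefully for three factors: beyond the pairwise trivial intersections you also need $(\mu_{s_1}\mu_{s_2})\cap\mu_{s_3}=\{1\}$; since the group is cyclic, $\mu_{s_1}\mu_{s_2}=\mu_{s_1s_2}$ and $\gcd(s_1s_2,s_3)=1$, so your order count $|\mu_{s_1}\mu_{s_2}\mu_{s_3}|=s_1s_2s_3=q^4-1$ and the uniqueness claim both go through. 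Your explicit B\'ezout recipe is in fact exactly what the paper records as Lemma~\ref{lem2}: the exponents $n_1=\frac{(1+q^2)(1+q)}{4}$, $n_2=\frac{(1+q^2)(1-q)}{4}$, $n_3=\frac{1-q^2}{2}$, read modulo $q^4-1$ (where $2$ and $4$ are invertible because $q^4-1$ is odd), satisfy $n_1+n_2+n_3=1$ and are your coefficients $a_is_js_\ell$, so $x_i=x^{n_i}$ is the same decomposition with $\alpha^k$ replaced directly by $x$.
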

In fact, $x_1,x_2,x_3$ in Lemma \ref{lem1} have specific expressions.
\begin{lem} \label{lem2} Let $n$ be a positive integer and $q=2^n$. For any $x\in \mathbb{F}_{q^4}^*$, we let $n_1=\frac{1+q^2}{2}\cdot\frac{1+q}{2},n_2=\frac{1+q^2}{2}\cdot\frac{1-q}{2},n_3=\frac{1-q^2}{2}$ and $x_1=x^{n_1},x_2=x^{n_2},x_3=x^{n_3}$. Then $x=x_1x_2x_3,$ where $x_1\in\mu_{q-1}$, $x_2\in\mu_{q+1}$, $x_3\in\mu_{q^2+1}$.
\end{lem}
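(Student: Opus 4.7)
The plan is to reduce the lemma to a pair of elementary identities on the exponents, all read in $\mathbb{Z}/(q^4-1)\mathbb{Z}$, the exponent group of $\mathbb{F}_{q^4}^*$. Since $q=2^n$, the group order $q^4-1$ is odd, so $2$ (and hence $4$) is invertible modulo $q^4-1$; this legitimises the halves appearing in the definitions of $n_1,n_2,n_3$ and lets us treat every claim of the lemma as a congruence on exponents.

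First I would check the multiplicative identity $x=x_1x_2x_3$ by verifying $n_1+n_2+n_3\equiv 1\pmod{q^4-1}$. Using $\tfrac{1+q}{2}+\tfrac{1-q}{2}=1$ one collapses $n_1+n_2$ to $\tfrac{1+q^2}{2}$, and adding $n_3=\tfrac{1-q^2}{2}$ yields exactly $1$. This is an identity of rational numbers, so it holds as a residue modulo $q^4-1$ under the interpretation above, giving $x^{n_1+n_2+n_3}=x$.

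Next I would dispatch the three membership statements $x_i\in\mu_{s_i}$ (for $s_1=q-1$, $s_2=q+1$, $s_3=q^2+1$) by checking $s_i n_i\equiv 0\pmod{q^4-1}$. The numerators telescope: $n_1(q-1)=\tfrac{(1+q^2)(q^2-1)}{4}=\tfrac{q^4-1}{4}$, and analogous collapses give $n_2(q+1)=-\tfrac{q^4-1}{4}$ and $n_3(q^2+1)=-\tfrac{q^4-1}{2}$. Each is a unit multiple of $q^4-1$, hence zero in the exponent group, so $x_1^{q-1}=x_2^{q+1}=x_3^{q^2+1}=1$.

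There is no substantive obstacle: the only point worth pausing over is the legitimacy of the fractional exponents, and once the oddness of $q^4-1$ is invoked the rest is direct telescoping. Note that uniqueness of the decomposition is not needed here, since it is already supplied by Lemma \ref{lem1}; the role of Lemma \ref{lem2} is purely to exhibit the three components as explicit power maps of $x$, which is exactly what the above computation delivers.
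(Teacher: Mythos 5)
Your proof is correct, and it takes a genuinely different route from the paper's. The paper starts from the existence statement of Lemma \ref{lem1}, writes $x=x_1x_2x_3$ with unknown components, applies the Frobenius power $q^2$ to get $x^{q^2}=x_1x_2x_3^{-1}$, divides the two relations to isolate $x_3^2=x^{1-q^2}$, and then extracts the square root (legitimate in characteristic $2$), with the formulas for $x_1$ and $x_2$ obtained "similarly" by the analogous Galois manipulations. You instead bypass Lemma \ref{lem1} entirely and verify the candidate formulas directly by exponent arithmetic in $\mathbb{Z}/(q^4-1)\mathbb{Z}$: the key observation that $q^4-1$ is odd makes $2$ and $4$ invertible, so the fractional exponents are well defined, and then $n_1+n_2+n_3=1$ gives $x_1x_2x_3=x$ while $(q-1)n_1$, $(q+1)n_2$, $(q^2+1)n_3$ are all of the form (unit)$\cdot(q^4-1)$, hence vanish in the exponent group, giving the three membership claims. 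Your computations check out ($(q-1)n_1=\tfrac{q^4-1}{4}$, $(q+1)n_2=-\tfrac{q^4-1}{4}$, $(q^2+1)n_3=-\tfrac{q^4-1}{2}$), and you correctly note that uniqueness is not part of this lemma. What your approach buys is self-containedness and explicitness: it reproves existence of the decomposition as a byproduct and makes the "half-integer exponent" issue fully rigorous, whereas the paper's derivation explains where the formulas come from (they are forced by the Galois action on the three subgroups) but leaves the analogous cases for $x_1,x_2$ to the reader.
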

\begin{proof}
From Lemma \ref{lem1}, we have $\mathbb{F}_{q^4}^*=\mu_{q^2-1}\cdot \mu_{q+1}\cdot \mu_{q^2+1}$. Then there are elements $x_1\in\mu_{q-1}$, $x_2\in\mu_{q+1}$, $x_3\in\mu_{q^2+1}$ such that
\begin{equation}\label{eq1}x=x_1x_2x_3.
\end{equation}
By taking the $q^2$ power on both sides of Eq. (\ref{eq1}), we have \begin{equation}\label{eq2}x^{q^2}=x_1x_2x_3^{-1}.
\end{equation}And then we divide both sides of Eq. (\ref{eq1}) and Eq. (\ref{eq2}), we obtain $x^{1-q^2}=x_3^2$, which gives us $x_3=x^{\frac{1-q^2}{2}}$ since $q=2^n$.

Similarly, we also obtain  $x_1=x^{\frac{1+q^2}{2}\cdot\frac{1+q}{2}}$, $x_2=x^{\frac{1+q^2}{2}\cdot\frac{1-q}{2}}$.
\end{proof}

\begin{lem}\label{lem3} \cite[Corollary 3.79]{LN} Let $p=2$ and $q$ be a power of $2$. For any $\delta\in \mathbb{F}_{q}$, the number of solutions to the equation $x^2+x+\delta=0$ in $\mathbb{F}_q$ is equal to
\begin{equation*}
 \begin{aligned}
 \left\{
        \begin{array}{ll}
        0, & if~\mathbf{Tr}_2^{q}(\delta)=1;\\
        2, & if~\mathbf{Tr}_2^{q}(\delta)=0.\\
        \end{array}
  \right.
  \end{aligned}
\end{equation*}
\end{lem}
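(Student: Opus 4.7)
The plan is to treat the map $\varphi\colon \mathbb{F}_q \to \mathbb{F}_q$ defined by $\varphi(x)=x^2+x$ as an $\mathbb{F}_2$-linear endomorphism (which it is, since squaring is additive in characteristic $2$) and pin down its image exactly. First I would compute the kernel: $\varphi(x)=x(x+1)=0$ forces $x\in\{0,1\}$, so $\ker\varphi=\{0,1\}$ has size $2$, and by rank–nullity the image $\mathrm{Im}\,\varphi$ is an $\mathbb{F}_2$-subspace of $\mathbb{F}_q$ of cardinality $q/2$. In particular, every $\delta$ lying in this image has exactly two preimages under $\varphi$, namely some $x_0$ and $x_0+1$, while every $\delta$ outside the image has none; this already reduces the problem to identifying the image.

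Next I would identify that image with the kernel of the absolute trace $\mathbf{Tr}_2^{q}$. For containment, any $\delta=x^2+x$ satisfies
\[
\mathbf{Tr}_2^{q}(\delta)=\mathbf{Tr}_2^{q}(x^2)+\mathbf{Tr}_2^{q}(x)=\mathbf{Tr}_2^{q}(x)+\mathbf{Tr}_2^{q}(x)=0,
\]
because $\mathbf{Tr}_2^{q}\circ\mathrm{Frob}_2=\mathbf{Tr}_2^{q}$ (the trace is invariant under the Frobenius automorphism it is built from). Hence $\mathrm{Im}\,\varphi\subseteq\ker\mathbf{Tr}_2^{q}$.

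Finally, a dimension (equivalently, cardinality) count closes the argument: $\ker\mathbf{Tr}_2^{q}$ is also an $\mathbb{F}_2$-subspace of $\mathbb{F}_q$, and since $\mathbf{Tr}_2^{q}\colon\mathbb{F}_q\to\mathbb{F}_2$ is a nonzero $\mathbb{F}_2$-linear map, its kernel has cardinality exactly $q/2$. Combined with the previous paragraph we obtain $\mathrm{Im}\,\varphi=\ker\mathbf{Tr}_2^{q}$. Translating back, the equation $x^2+x+\delta=0$ has $2$ solutions in $\mathbb{F}_q$ when $\mathbf{Tr}_2^{q}(\delta)=0$ and no solution when $\mathbf{Tr}_2^{q}(\delta)=1$, as claimed. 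The only mildly delicate point is verifying the non-vanishing of $\mathbf{Tr}_2^{q}$ (so that its kernel has the right size); this follows because $\mathbf{Tr}_2^{q}$ is a non-trivial polynomial of degree $q/2 < q$, hence cannot be identically zero.
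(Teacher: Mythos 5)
Your proof is correct. Note that the paper does not prove this lemma at all --- it is quoted from Lidl--Niederreiter (Corollary 3.79) as a known fact --- so there is no in-paper argument to compare against; your reasoning is the standard Artin--Schreier argument (the $\mathbb{F}_2$-linear map $x\mapsto x^2+x$ has kernel $\{0,1\}$, hence image of size $q/2$ contained in, and therefore equal to, the kernel of the absolute trace), and every step, including the observation that $\mathbf{Tr}_2^{q}(x^2)=\mathbf{Tr}_2^{q}(x)$ and the non-vanishing of the trace map, checks out.
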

According to Lemma \ref{lem3} and \cite{DFHR,LCXM}, we can easily obtain the following result.
\begin{lem} \label{lem4}
Let $n$ be a positive integer and $q=2^n$. For any $a\in \mathbb{F}_{q^2}^*$, we have
\begin{itemize}
  \item [(1)] the equation $x+x^{-1}=a$
has two different solutions in $\mu_{q^2+1}\backslash\{1\}$ if and only $\mathbf{Tr}_2^{q^2}(\frac{1}{a})=1$.
  \item [(2)] the equation $x+x^{-1}=a$
has two different solutions in $\mathbb{F}_{q^2}\backslash\{1\}$ if and only $\mathbf{Tr}_2^{q^2}(\frac{1}{a})=0$.
\end{itemize}

\end{lem}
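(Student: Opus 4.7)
The plan is to reduce the equation $x+x^{-1}=a$ to the standard Artin--Schreier shape required by Lemma~\ref{lem3}. Since $a\in\mathbb{F}_{q^2}^*$ and we are in characteristic~$2$, multiplying by $x$ (which is necessarily nonzero) turns the equation into the quadratic $x^2+ax+1=0$. Then the substitution $y=x/a$ (legal because $a\neq 0$) produces the equivalent equation $y^2+y+a^{-2}=0$. By Lemma~\ref{lem3} applied over $\mathbb{F}_{q^2}$, this has $\mathbb{F}_{q^2}$-roots precisely when $\mathbf{Tr}_2^{q^2}(a^{-2})=0$. Because Frobenius commutes with the trace and $\mathbf{Tr}_2^{q^2}$ takes values in the $x\mapsto x^2$-fixed field $\mathbb{F}_2$, one has $\mathbf{Tr}_2^{q^2}(a^{-2})=\mathbf{Tr}_2^{q^2}(a^{-1})$, so the dichotomy of the lemma is governed by $\mathbf{Tr}_2^{q^2}(1/a)\in\{0,1\}$.

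Before splitting into cases, I would record two easy observations: (i) for $a\neq 0$ the quadratic $x^2+ax+1$ cannot be a perfect square (that would force $a=0$), so its two roots are always distinct; and (ii) $x=1$ is never a root, since it would give $a=1+1=0$. Moreover, Vieta's formulas yield $x_1 x_2=1$ for the two roots.

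For part~(2): if $\mathbf{Tr}_2^{q^2}(1/a)=0$, then Lemma~\ref{lem3} furnishes two distinct solutions of $y^2+y+a^{-2}=0$ in $\mathbb{F}_{q^2}$, and multiplying by $a$ returns two distinct solutions $x_1,x_2\in\mathbb{F}_{q^2}$ of the original equation; by observation~(ii) neither equals $1$, so both lie in $\mathbb{F}_{q^2}\setminus\{1\}$, as desired. Conversely, any solutions in $\mathbb{F}_{q^2}\setminus\{1\}$ force $\mathbf{Tr}_2^{q^2}(1/a)=0$ through the same chain of equivalences. For part~(1): if $\mathbf{Tr}_2^{q^2}(1/a)=1$, then $x^2+ax+1=0$ has no root in $\mathbb{F}_{q^2}$ but still splits over $\mathbb{F}_{q^4}$; the two roots are Galois conjugates over $\mathbb{F}_{q^2}$, so $x_2=x_1^{q^2}$. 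Combined with $x_1 x_2=1$ (Vieta) this gives $x_1^{q^2+1}=1$, i.e.\ $x_1\in\mu_{q^2+1}$, and by observation~(ii) $x_1\neq 1$; the same holds for $x_2$. The converse direction is immediate since roots in $\mu_{q^2+1}\setminus\{1\}\subset \mathbb{F}_{q^4}\setminus\mathbb{F}_{q^2}$ preclude roots in $\mathbb{F}_{q^2}$, forcing $\mathbf{Tr}_2^{q^2}(1/a)=1$.

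The only conceptual step beyond direct bookkeeping is the identification in part~(1) of the ambient subgroup of the non-$\mathbb{F}_{q^2}$ solutions as $\mu_{q^2+1}$; this is where Vieta's product relation meets Galois conjugation and is the hinge of the argument. All remaining ingredients---the trace criterion of Lemma~\ref{lem3}, the squaring invariance of the absolute trace, and the distinctness of the two roots in characteristic~$2$---are routine.
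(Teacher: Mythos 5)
Your proof is correct and follows exactly the route the paper intends: the paper gives no written proof, merely asserting the lemma follows from Lemma~\ref{lem3} and the cited references, and your reduction of $x+x^{-1}=a$ to $y^2+y+a^{-2}=0$ via $x=ay$, together with the trace identity $\mathbf{Tr}_2^{q^2}(a^{-2})=\mathbf{Tr}_2^{q^2}(a^{-1})$ and the Vieta/Frobenius argument placing the irreducible case's roots in $\mu_{q^2+1}\setminus\{1\}$, is the standard argument being invoked. The one step you assert without justification, $\mu_{q^2+1}\setminus\{1\}\subset\mathbb{F}_{q^4}\setminus\mathbb{F}_{q^2}$, is immediate from $\gcd(q^2+1,q^2-1)=1$ for even $q$, so nothing is missing.
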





From Lemma \ref{lem4}, we have the following corollary.
\begin{coro}\label{coro1} Let $\Phi: \mathbb{F}_{q^4}^*\longrightarrow \mathbb{F}_{q^4}$ satisfy $\Phi(x)=x+x^{-1}=a$. Then
\begin{enumerate}
  \item [(1)] $\Phi(\mu_{q^2+1}\backslash \{1\})=\{a\in \mathbb{F}_{q^2}^*: \mathbf{Tr}_2^{q^2}\left(\frac{1}{a}\right)=1\};$
  \item [(2)] $\Phi(\mathbb{F}_{q^2}^*\backslash \{1\})=\{a\in \mathbb{F}_{q}^*: \mathbf{Tr}_2^{q^2}\left(\frac{1}{a}\right)=0\};$
  \item [(3)] $\Phi(\mu_{q+1}\backslash \{1\})=\{a\in \mathbb{F}_{q}^*: \mathbf{Tr}_2^{q}\left(\frac{1}{a}\right)=1\};$
  \item [(4)] $\Phi(\mathbb{F}_{q}^*\backslash \{1\})=\{a\in \mathbb{F}_{q}^*: \mathbf{Tr}_2^{q}\left(\frac{1}{a}\right)=0\}.$
\end{enumerate}
\end{coro}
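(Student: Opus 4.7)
The plan is to reduce the relation $\Phi(x)=a$ to the quadratic $x^2+ax+1=0$ and apply the Artin--Schreier criterion (Lemma \ref{lem3}) together with Lemma \ref{lem4}. Each of the four items asserts that a specific domain subset maps onto the $a$'s satisfying a trace equation, and all four proofs follow the same two-step pattern: forward containment (any $x$ in the domain subset produces an $a$ with the correct trace value) and backward containment (any $a$ with the correct trace value admits a preimage in the prescribed subset).

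For parts (1) and (2), both containments are essentially restatements of Lemma \ref{lem4}. I would first observe that for $x\in\mu_{q^2+1}$ one has $x^{-1}=x^{q^2}$, so $a=x+x^{-1}=x+x^{q^2}$ is fixed by the $q^2$-power Frobenius and therefore lies in $\mathbb{F}_{q^2}$; moreover $a\ne 0$ since $x\ne 1$. Lemma \ref{lem4}(1) then yields $\mathbf{Tr}_2^{q^2}(1/a)=1$ in the forward direction, while its converse produces the preimage in $\mu_{q^2+1}\setminus\{1\}$. Part (2) is handled in exactly the same way via Lemma \ref{lem4}(2), starting from $x\in\mathbb{F}_{q^2}^*\setminus\{1\}$, which gives $a\in\mathbb{F}_{q^2}^*$ with $\mathbf{Tr}_2^{q^2}(1/a)=0$.

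For parts (3) and (4), I would first establish the $\mathbb{F}_q$-level analogue of Lemma \ref{lem4}. The substitution $x=ay$ (with $a\in\mathbb{F}_q^*$) turns $x^2+ax+1=0$ into $y^2+y+1/a^2=0$; in characteristic $2$ we have $\mathbf{Tr}_2^q(1/a^2)=\mathbf{Tr}_2^q(1/a)^2=\mathbf{Tr}_2^q(1/a)$, so Lemma \ref{lem3} gives two solutions in $\mathbb{F}_q$ precisely when $\mathbf{Tr}_2^q(1/a)=0$. When that trace equals $1$, the two roots lie in $\mathbb{F}_{q^2}\setminus\mathbb{F}_q$ and are Frobenius conjugates $x,x^q$; the constant term of the polynomial forces $x\cdot x^q=1$, so $x\in\mu_{q+1}$, and $x\ne 1$ because $a\ne 0$. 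Reading the same quadratic in the forward direction recovers (3) and (4); here I would use that $\mathbb{F}_q^*\cap\mu_{q+1}=\{1\}$ in characteristic $2$ to ensure the two domains are cleanly separated.

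I do not expect any genuine obstacle: the argument is a packaging of Lemmas \ref{lem3} and \ref{lem4}. The only points requiring care are the trace identity $\mathbf{Tr}_2^q(1/a^2)=\mathbf{Tr}_2^q(1/a)$ (and its $q^2$-level counterpart), and matching the correct trace value ($0$ or $1$) to the correct domain subset in each of the four items so that the biconditionals line up as stated.
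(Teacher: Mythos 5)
Your proof is correct and follows the route the paper intends: the paper gives no written proof of this corollary, simply citing Lemma \ref{lem4}, and your argument is exactly the expected elaboration (parts (1)--(2) are Lemma \ref{lem4} restated as an image computation, and parts (3)--(4) are its $\mathbb{F}_q$-level analogue obtained from Lemma \ref{lem3} via the substitution $x=ay$ and the identity $\mathbf{Tr}_2^{q}(1/a^2)=\mathbf{Tr}_2^{q}(1/a)$, together with $\mathbb{F}_q^*\cap\mu_{q+1}=\{1\}$). One remark: in part (2) your version, with image $\left\{a\in\mathbb{F}_{q^2}^*:\mathbf{Tr}_2^{q^2}\left(\frac{1}{a}\right)=0\right\}$, is the correct one --- the paper's displayed statement writes $\mathbb{F}_q^*$ there, which is a typo, since the $2$-to-$1$ map $\Phi$ on $\mathbb{F}_{q^2}^*\setminus\{1\}$ has an image of $\frac{q^2}{2}-1$ elements, too large to fit inside $\mathbb{F}_q^*$.
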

According to the knowledge of systems of linear equations, the following result is easily obtained.
\begin{lem}\label{lem6}
Let $\beta\in \mathbb{F}_{q}^*$ and $\gamma \in \mathbb{F}_{q}$. Then the number of solutions to  the system of equations
\begin{equation*}
 \begin{aligned}
 \left\{
        \begin{array}{ll}
        \mathbf{Tr}_{2}^{q}(\beta x)=1;\\
        \mathbf{Tr}_{2}^{q}(\gamma x)=1,\\
        \end{array}
  \right.
  \end{aligned}
\end{equation*} in $\mathbb{F}_{q}$ is equal to
\begin{equation*}
 \begin{aligned}
 \left\{
        \begin{array}{ll}
        0,~if~\gamma =0;\\
        \frac{q}{2},~if~\gamma =\beta;\\
        \frac{q}{4},~if~\gamma \neq 0,\beta.\\
        \end{array}
  \right.
  \end{aligned}
\end{equation*}
\end{lem}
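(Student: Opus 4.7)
The plan is to treat the system as a pair of $\mathbb{F}_2$-linear conditions on $\mathbb{F}_q$ viewed as an $n$-dimensional $\mathbb{F}_2$-vector space, and split into the three cases by how the two linear functionals $\ell_\beta(x)=\mathbf{Tr}_2^q(\beta x)$ and $\ell_\gamma(x)=\mathbf{Tr}_2^q(\gamma x)$ are related.

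First I would dispense with the case $\gamma=0$: the second equation becomes $\mathbf{Tr}_2^q(0)=0\neq 1$, so the system has no solutions. Next I would handle $\gamma=\beta$: the two equations coincide, so the count is $|\{x\in\mathbb{F}_q:\mathbf{Tr}_2^q(\beta x)=1\}|$. Because $\beta\neq 0$, the map $x\mapsto \beta x$ is an $\mathbb{F}_2$-linear bijection of $\mathbb{F}_q$, and $\mathbf{Tr}_2^q$ is a surjective $\mathbb{F}_2$-linear form $\mathbb{F}_q\to\mathbb{F}_2$; hence $\ell_\beta$ is a surjective $\mathbb{F}_2$-linear form, and each fibre has size $q/2$.

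For the main case $\gamma\neq 0,\beta$, I would consider the joint $\mathbb{F}_2$-linear map
\begin{equation*}
L:\mathbb{F}_q\longrightarrow \mathbb{F}_2^2,\qquad L(x)=\bigl(\ell_\beta(x),\,\ell_\gamma(x)\bigr),
\end{equation*}
and show that it is surjective; the desired count is then $|L^{-1}(1,1)|=q/4$. Surjectivity of $L$ is equivalent to the $\mathbb{F}_2$-linear independence of $\ell_\beta$ and $\ell_\gamma$. Suppose, for contradiction, that $\ell_\beta=\ell_\gamma$ (the only possible nontrivial dependence over $\mathbb{F}_2$, as both are nonzero). Then $\mathbf{Tr}_2^q((\beta+\gamma)x)=0$ for every $x\in\mathbb{F}_q$. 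By the nondegeneracy of the trace pairing $(u,v)\mapsto\mathbf{Tr}_2^q(uv)$ on $\mathbb{F}_q$, this forces $\beta+\gamma=0$, i.e.\ $\gamma=\beta$ in characteristic $2$, contradicting $\gamma\neq \beta$. Hence $L$ is surjective and every fibre has size $q/2^2=q/4$.

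I do not anticipate a serious obstacle: the argument is entirely linear-algebraic once one invokes the standard fact that every $\mathbb{F}_2$-linear form $\mathbb{F}_q\to\mathbb{F}_2$ is of the shape $x\mapsto\mathbf{Tr}_2^q(\alpha x)$ for a unique $\alpha\in\mathbb{F}_q$. The only point requiring a line of justification is the nondegeneracy of the trace pairing, which is immediate from this representation theorem (or directly from the fact that $\mathbf{Tr}_2^q$ is not identically zero on $\mathbb{F}_q$).
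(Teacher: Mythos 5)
Your proof is correct, and it is essentially the argument the paper has in mind: the paper states this lemma without proof, remarking only that it follows ``according to the knowledge of systems of linear equations,'' which is precisely the linear-algebra-over-$\mathbb{F}_2$ argument you supply (nondegeneracy of the trace pairing giving independence of the two forms when $\gamma\neq 0,\beta$, hence fibres of size $q/4$). Your write-up fills the gap the paper leaves implicit; no issues.
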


\begin{lem}\label{lem7}
Let $c\in \mathbb{F}_{q^4}^*$. Let $\mathfrak{t}=\mathbf{Tr}_{q^2}^{q^4}(c)$ and $\mathfrak{n}=\mathbf{N}_{q^2}^{q^4}(c)$. The set $$\Lambda =\left\{c\in \mathbb{F}_{q^4}^*: \mathfrak{t}'\neq 0~{\rm and}~\mathbf{Tr}_{2}^{q}\left(\frac{\mathfrak{n}'^{\frac{q+1}{2}}}{\mathfrak{t}'}\right)=1, {\rm where}~\mathfrak{t}'=\mathfrak{t}+\mathfrak{t}^q, \mathfrak{n}'=\mathfrak{n}+1\right\}.$$ Then the  cardinality of the set $\Lambda$ is equal to $\frac{1}{2}q^3(q-1)$.
\end{lem}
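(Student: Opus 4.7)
The plan is to use the multiplicative decomposition from Lemmas \ref{lem1}--\ref{lem2} to reduce the count of $|\Lambda|$ to a uniform character-sum evaluation. Writing $c = c_1 c_2 c_3 \in \mathbb{F}_{q^4}^*$ with $c_1 \in \mu_{q-1}$, $c_2 \in \mu_{q+1}$, $c_3 \in \mu_{q^2+1}$, and using $c_1^{q^2}=c_1$, $c_2^{q^2}=c_2$, $c_3^{q^2}=c_3^{-1}$, on setting $y:=c_1 c_2 \in \mathbb{F}_{q^2}^*$ and $\delta:=c_3+c_3^{-1}$ I obtain $\mathfrak{t}=y\delta$, $\mathfrak{n}=y^2$, and therefore
\[
\mathfrak{t}' = \mathbf{Tr}_q^{q^2}(y\delta), \qquad \mathfrak{n}'^{(q+1)/2} = ((y+1)^2)^{(q+1)/2} = (y+1)^{q+1} = \mathbf{N}_q^{q^2}(y+1).
\]
Since $\mu_{q-1}\cap\mu_{q+1}=\{1\}$, the map $(c_1,c_2)\mapsto c_1c_2$ is a bijection $\mu_{q-1}\times\mu_{q+1}\to\mathbb{F}_{q^2}^*$, so $(y,c_3)$ parameterizes $\mathbb{F}_{q^4}^*$ bijectively.

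By Corollary \ref{coro1}(1), as $c_3$ ranges over $\mu_{q^2+1}\setminus\{1\}$, $\delta$ is a $2$-to-$1$ image onto $D:=\{a\in\mathbb{F}_{q^2}^* : \mathbf{Tr}_2^{q^2}(1/a)=1\}$; the case $c_3=1$ makes $\mathfrak{t}'=0$ and contributes nothing. For each fixed $y$, the substitution $z:=y\delta$ bijects $D$ onto $\{z\in\mathbb{F}_{q^2}^* : \mathbf{Tr}_2^{q^2}(y/z)=1\}$. After swapping the order of summation I obtain
\[
|\Lambda| = 2\sum_{\substack{z\in\mathbb{F}_{q^2}^*\\ \rho:=\mathbf{Tr}_q^{q^2}(z)\neq 0}} I(z),
\]
where $I(z) := \#\{y\in\mathbb{F}_{q^2}^* : \mathbf{Tr}_2^{q^2}(y/z)=1,\ \mathbf{Tr}_2^q(\mathbf{N}_q^{q^2}(y+1)/\rho)=1\}$.

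The heart of the argument is to show $I(z)=q^2/4$ for every such $z$. Substituting $Y:=y+1$ and expressing the two indicators using the additive characters $\chi_2(x):=(-1)^{\mathbf{Tr}_2^{q^2}(x)}$ and $\chi_1(x):=(-1)^{\mathbf{Tr}_2^q(x)}$, expansion reduces $I(z)$ to four sums over $Y\in\mathbb{F}_{q^2}$: the trivial one gives $q^2$, $\sum_Y\chi_2(Y/z)=0$, $\sum_Y\chi_1(Y^{q+1}/\rho)=-q$ (via the $(q+1)$-to-$1$ norm map), and the twisted Gauss sum $G:=\sum_Y\chi_1(Y^{q+1}/\rho)\chi_2(Y/z)$. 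To evaluate $G$ I substitute $Y=zY'$ to reach $\sum_{Y'}\chi_1(\mu Y'^{q+1})\chi_2(Y')$ with $\mu:=z^{q+1}/\rho\in\mathbb{F}_q^*$, then pick an $\mathbb{F}_q$-basis $\{1,\theta_0\}$ of $\mathbb{F}_{q^2}$ with $\theta_0^2+\theta_0+\beta=0$ and $\mathbf{Tr}_2^q(\beta)=1$; writing $Y'=a+b\theta_0$ yields $Y'^{q+1}=a^2+ab+\beta b^2$ and $\mathbf{Tr}_2^{q^2}(Y')=\mathbf{Tr}_2^q(b)$. Using $\mathbf{Tr}_2^q(x^2)=\mathbf{Tr}_2^q(x)$ to linearize, the inner $a$-sum collapses to a Dirac delta forcing $b=\mu^{-1/2}$, giving $G=-q(-1)^{\mathbf{Tr}_2^q(1/\mu)}$. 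The crucial identity $\rho/z^{q+1}=\mathbf{Tr}_q^{q^2}(1/z)$ then yields $\mathbf{Tr}_2^q(1/\mu)=\mathbf{Tr}_2^{q^2}(1/z)=:\theta$, so $G=-q(-1)^\theta$. This phase exactly matches the coefficient $(-1)^{1+\theta}$ from the Fourier expansion of the first trace condition: the product $(-1)^{1+\theta}(-1)^\theta=(-1)^{1+2\theta}=-1$ is independent of $\theta$, so $4I(z)=q^2+q-q=q^2$.

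Finally, $\#\{z\in\mathbb{F}_{q^2}^* : \mathbf{Tr}_q^{q^2}(z)\neq 0\}=q^2-q$, which yields $|\Lambda|=2(q^2-q)\cdot q^2/4=\tfrac12 q^3(q-1)$. The main obstacle is the twisted Gauss sum $G$; what makes the proof succeed is the double-trace identity $\mathbf{Tr}_2^{q^2}(1/z)=\mathbf{Tr}_2^q(\rho/z^{q+1})$, which produces exactly the phase matching needed to collapse $I(z)$ to the $z$-independent value $q^2/4$.
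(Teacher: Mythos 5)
Your proof is correct, and it takes a genuinely different route from the paper's. The paper pushes the multiplicative decomposition one level further: after writing $c=c_1c_2c_3$ and $a=c_3+c_3^{-1}=a_1a_2$ with $a_1\in\mu_{q-1}$, $a_2\in\mu_{q+1}$, it converts the two conditions into a system of two $\mathbf{Tr}_2^q$-linear equations in $x=1/a_1$ (Lemma \ref{lem6}) and then runs a case analysis on $(c_1,c_2,a_2)$ according to whether $\gamma=0$, $\gamma=\beta$, or $\gamma\neq 0,\beta$, summing four unequal contributions to reach $\frac{1}{2}q^3(q-1)$. You instead stay at the level of $\mathbb{F}_{q^2}$ via $y=c_1c_2$, swap the order of summation, and show that the inner count $I(z)$ is \emph{uniformly} $q^2/4$ by evaluating the twisted Gauss sum $G$; the identity $\rho/z^{q+1}=\mathbf{Tr}_q^{q^2}(1/z)$ is what makes the phase of $G$ cancel against $\chi_2(1/z)$ and thereby eliminates any case distinction. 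I checked the key steps: $\mathfrak{t}=y\delta$ and $\mathfrak{n}=y^2$; the $2$-to-$1$ parametrization of $D$ via Corollary \ref{coro1}(1); the harmlessness of including $y=0$ (its indicator $1-\chi_2(0)$ vanishes); the evaluation $G=-q(-1)^{\mathbf{Tr}_2^q(1/\mu)}$ through the basis $\{1,\theta_0\}$ with $Y'^{q+1}=a^2+ab+\beta b^2$; and the final tally $2(q^2-q)\cdot q^2/4=\frac{1}{2}q^3(q-1)$. All are sound. Your approach buys uniformity and a reusable exponential-sum technique that should adapt to the higher-degree cases mentioned in the conclusion; the paper's buys elementarity, using nothing beyond counting common solutions of two linear trace conditions. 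One small expository nit: the sign bookkeeping around ``the coefficient $(-1)^{1+\theta}$'' is stated loosely --- the cross term in the expansion of the product of the two indicators is $+\tfrac14\chi_2(1/z)G=-q/4$ while the single-indicator term contributes $+q/4$ --- but your final identity $4I(z)=q^2+q-q=q^2$ is exactly right.
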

\begin{proof}
It is easy to know that $\mathfrak{t}'=\mathfrak{t}+\mathfrak{t}^q=c+c^{q^2}+(c+c^{q^2})^{q}=c+c^{q}+c^{q^2}+c^{q^3}=\mathbf{Tr}_q^{q^4}(c)$. Since $\mathfrak{t}'\neq 0$, we have $c\neq 0$. By Lemma \ref{lem2}, we obtain that there are unique elements $c_1\in\mu_{q-1}$, $c_2\in\mu_{q+1}$, $c_3\in\mu_{q^2+1}$ such that $c$ can be uniquely decomposed into the form $c=c_1c_2c_3$. Then we have $\mathfrak{n}'=c_1^2c_2^2+1, \mathfrak{t}'=c_1(c_2(c_3+c_3^{-1})+c_2^{-1}(c_3+c_3^{-1})^q)$. Since $\mathfrak{t}'\neq 0$, we have $c_3\neq 1$. It is easy to know that $\mathfrak{n}'^{\frac{1}{2}}=\mathfrak{n}'^{\frac{q^2}{2}}=(c_1^2c_2^2+1)^{\frac{q^2}{2}}=c_1c_2+1$ and then $\mathfrak{n}'^{\frac{q+1}{2}}=(c_1c_2+1)^{q+1}=c_1^2+c_1(c_2+c_2^{-1})+1.$ Thus, $\mathbf{Tr}_2^{q}\left(\frac{\mathfrak{n}'^{\frac{q+1}{2}}}{\mathfrak{t}'}\right)=\mathbf{Tr}_2^{q}\left(\frac{c_1+c_1^{-1}+c_2+c_2^{-1}}{c_2(c_3+c_3^{-1})+c_2^{-1}(c_3+c_3^{-1})^q}\right)=1$.
Set $a=c_3+c_3^{-1}\in \mathbb{F}_{q^2}^*,$ we have $\mathbf{Tr}_2^{q^2}\left(\frac{1}{a}\right)=1$ by Corollary \ref{coro1}(1) and $\mathbf{Tr}_2^{q}\left(\frac{c_1+c_1^{-1}+c_2+c_2^{-1}}{c_2a+c_2^{-1}a^q}\right)=1.$ From Lemma \ref{lem2}, there are unique elements $a_1\in \mu_{q-1},a_2\in \mu_{q+1}$ such that $a$ can be uniquely decomposed into the form $a=a_1a_2.$ So, we have
\begin{equation}\label{eq15}
 \begin{aligned}
 \left\{
        \begin{array}{ll}
        1=\mathbf{Tr}_2^{q^2}\left(\frac{1}{a}\right)=\mathbf{Tr}_2^{q}\left(\frac{1}{a_1}(a_2+a_2^{-1})\right);\\
        1=\mathbf{Tr}_2^{q}\left(\frac{c_1+c_1^{-1}+c_2+c_2^{-1}}{c_2a+c_2^{-1}a^q}\right)=\mathbf{Tr}_2^{q}\left(\frac{1}{a_1}\cdot\frac{c_1+c_1^{-1}+c_2+c_2^{-1}}{a_2c_2+a_2^{-1}c_2^{-1}}\right).\\
        \end{array}
  \right.
  \end{aligned}
\end{equation}
Set $x=\frac{1}{a_1},\beta=a_2+a_2^{-1},\gamma=\frac{c_1+c_1^{-1}+c_2+c_2^{-1}}{a_2c_2+a_2^{-1}c_2^{-1}}$, we have $\beta\neq 0$ and $a_2\in \mu_{q+1}\backslash \{1\}$. Then Eq. (\ref{eq15}) is  equivalent to
\begin{equation}\label{eq16}
 \begin{aligned}
 \left\{
        \begin{array}{ll}
        \mathbf{Tr}_{2}^{q}(\beta x)=1;\\
        \mathbf{Tr}_{2}^{q}(\gamma x)=1.\\
        \end{array}
  \right.
  \end{aligned}
\end{equation}
Note that $\gamma=\beta$ if and only if $c_1+c_1^{-1}=a_2^2c_2+(a_2^2c_2)^{-1}$, i.e., $c_1=1$ and $a_2^2c_2=1$ by Corollary \ref{coro1}(3-4). 
Note that $a_2c_2+a_2^{-1}c_2^{-1}\neq 0$, i.e., $c_2\neq a_2^{-1}$.

If $c_1\neq1$, then $\gamma\neq \beta$ and $\gamma \neq 0.$ From Lemma \ref{lem6}, we obtain that Eq. (\ref{eq16}) has $\frac{q}{4}$ solutions in $\mathbb{F}_q$. Therefore, there are $(q-2)\cdot q\cdot q\cdot \frac{q}{4}\cdot 2=\frac{1}{2}q^4-q^3$ choices for $(c_1,a_2,c_2,a_1,c_3)$.

If $c_1=1$ and $c_2=1$, then $\gamma =0$. From Lemma \ref{lem6}, we obtain that Eq. (\ref{eq16}) has no solution in $\mathbb{F}_q$. Therefore, there are $1\cdot q\cdot 1\cdot 0\cdot 0=0$ choices for $(c_1,a_2,c_2,a_1,c_3)$.

If $c_1=1$ and $c_2=a_2^{-2}$, then $\gamma=\beta$. From Lemma \ref{lem6}, we obtain that Eq. (\ref{eq16}) has $\frac{q}{2}$ solutions in $\mathbb{F}_q$. Therefore, there are $1\cdot q\cdot 1\cdot \frac{q}{2}\cdot 2=q^2$ choices for $(c_1,a_2,c_2,a_1,c_3)$.

If $c_1=1$ and $c_2\in \mu_{q+1}\backslash \{1,a_2^{-2}\}$, then $\gamma\neq\beta$ and $\gamma \neq 0.$ From Lemma \ref{lem6}, we obtain that Eq. (\ref{eq16}) has $\frac{q}{4}$ solutions in $\mathbb{F}_q$. Therefore, there are $1\cdot q\cdot (q-2)\cdot \frac{q}{4}\cdot 2=\frac{1}{2}q^3-q^2$ choices for $(c_1,a_2,c_2,a_1,c_3)$.

Hence, the number of the set $\Lambda$ is equal to $(\frac{1}{2}q^4-q^3)+0+q^2+(\frac{1}{2}q^3-q^2)=\frac{1}{2}q^3(q-1)$.
\end{proof}

\section{Proof of Theorem \ref{thm}}
Let $q=2^n$ and $n$ be a positive integer. Let $b\in \mathbb{F}_{q^4}$. Let $d=q^3+q^2+q-1$. We prove the following.

To complete the proof of Theorem \ref{thm}, it is sufficient to determine the number of solutions in $\mathbb{F}_{q^4}$ 
 of the equation
\begin{equation}\label{eq4}
 x^d+(x+1)^d=b
\end{equation}
when $b$ runs through $\mathbb{F}_{q^4}$.

Assume that $y=x+1$, then we obtain that Eq. (\ref{eq4}) is equivalent to
\begin{equation}\label{eq5}
 \begin{aligned}
 \left\{
        \begin{array}{ll}
        x+y=1;\\
        x^d+y^d=b,\\
        \end{array}
  \right.
  \end{aligned}x,y\in \mathbb{F}_{q^4}.
\end{equation}
We assume that $\Omega:=\mathbb{F}_{q^4}\times \mathbb{F}_{q^4}$ and $\Omega^*:=\mathbb{F}_{q^4}^*\times \mathbb{F}_{q^4}^*$. For any subset $\Theta$ of $\Omega$, we define the following parameter
$$N_b(\Theta):=|\{(x,y)\in \Theta: (x,y)~{\rm is~the~solution~to~Eq.~(\ref{eq5})}\}|.$$
It is obvious that
\begin{equation*}
 N_b(\{(1,0),(0,1)\})=\begin{aligned}
 \left\{
        \begin{array}{ll}
        2,~b=1 ;\\
        0,~b\neq 1,\\
        \end{array}
  \right.
  \end{aligned}
\end{equation*}
and $N_b(\Omega)=N_b(\{(1,0),(0,1)\})+N_b(\Omega^*)$.

Thus, we just need to determine $$N_b(\Omega^*):=|\{(x,y)\in \Omega^*: (x,y)~{\rm is~the~solution~to~Eq.~(\ref{eq5})}\}|.$$

Due to $x, y\in \mathbb{F}_{q^4}^*$, and by Lemma \ref{lem2}, we can assume that $x=x_1x_2x_3$ and $y=y_1y_2y_3$, where $x_1,y_1\in \mu_{q-1}$, $x_2,y_2\in \mu_{q+1}$ and $x_3,y_3\in \mu_{q^2+1}$. Then, we have $$x^d=x^{q^3+q^2+q+1-2}=\frac{\mathbf{N}_q^{q^4}(x)}{x^2}=\frac{\mathbf{N}_q^{q^4}(x_1x_2x_3)}{x_1^2x_2^2x_3^2}=x_1^{2}x_2^{-2}x_3^{-2}.$$
Similarly, $y^d=y_1^{2}y_2^{-2}y_3^{-2}$. Hence, the Eq. (\ref{eq5}) is equivalent to
\begin{equation}\label{eq6}
 \begin{aligned}
 \left\{
        \begin{array}{ll}
        x_1x_2x_3+y_1y_2y_3=1;\\
        x_1^{2}x_2^{-2}x_3^{-2}+y_1^{2}y_2^{-2}y_3^{-2}=b.\\
        \end{array}
  \right.
  \end{aligned}
\end{equation}
Next, we consider the equation $x_1^{2}x_2^{-2}x_3^{-2}+y_1^{2}y_2^{-2}y_3^{-2}=b$, by taking the $\frac{q^2}{2}$ power on both sides of it, we have
$x_1x_2^{-1}x_3+y_1y_2^{-1}y_3=b'$, where $b'=b^{\frac{q^2}{2}}$. Therefore, the Eq. (\ref{eq6}) is equivalent to
\begin{equation}\label{eq7}
 \begin{aligned}
 \left\{
        \begin{array}{ll}
        x_2\cdot x_1x_3+y_2\cdot y_1y_3=1;\\
        x_2^{-1}\cdot x_1x_3+y_2^{-1}\cdot y_1y_3=b'.\\
        \end{array}
  \right.
  \end{aligned}
\end{equation}
It obvious that the determinant of the coefficients of the system of Eqs. (\ref{eq7}) is
\begin{eqnarray*}
\Delta=\begin{array}{|cc|}
   x_2 &    y_2      \\
    x_2^{-1}  &  y_2^{-1}   \\
\end{array}=\frac{(x_2+y_2)^2}{x_2y_2}.
\end{eqnarray*}

Next, let's discuss the rest of the proof in two cases  depending
on $\Delta=0$ or $\Delta\neq 0$.

\textbf{(1) Case 1: $\Delta=0$, i.e., $x_2=y_2$}

In this case, we obtain that the Eq. (\ref{eq7}) is equivalent to
\begin{equation}\label{eq8}
 \begin{aligned}
 \left\{
        \begin{array}{ll}
        x_1x_3+y_1y_3=x_2^{-1};\\
        x_1x_3+y_1y_3=b'x_2.\\
        \end{array}
  \right.
  \end{aligned}
\end{equation}

$\bullet$ If $b'\notin \mu_{q+1}$, then Eq. (\ref{eq8}) has no solution in $\mathbb{F}_{q^4}$.

$\bullet$ If $b'\in \mu_{q+1}$, then we have $x_2=y_2=b'^{-\frac{1}{2}}\in \mu_{q+1}$, and $x_1x_3+y_1y_3=b''$, where $b''=b'^{\frac{1}{2}}$. By taking the $q^2$ power on both sides of the equation $x_1x_3+y_1y_3=b''$, we have $x_1x_3^{-1}+y_1y_3^{-1}=b''$. Hence, the Eq. (\ref{eq8}) is equivalent to
\begin{equation}\label{eq9}
 \begin{aligned}
 \left\{
        \begin{array}{ll}
        y_1y_3=x_1x_3+b'';\\
        y_1y_3^{-1}=x_1x_3^{-1}+b''.\\
        \end{array}
  \right.
  \end{aligned}
\end{equation}
Let's multiply both sides of these two equations in (\ref{eq9}), we have
\begin{equation}\label{eq10}
x_3+x_3^{-1}=\frac{y_1^2+x_1^2+b''^2}{x_1b''}\in \mathbb{F}_{q^2}.
\end{equation}

If $\frac{y_1^2+x_1^2+b''^2}{x_1b''}=0$, then
$y_1+x_1=b''=1$, and $x_3=y_3=1$. Note that $|\{(x_1,y_1)\in \mathbb{F}_{q}^*\times \mathbb{F}_{q}^*: x_1+y_1=1\}|=q-2$. Hence, Eq. (\ref{eq8}) has $q-2$ solutions  in $\mathbb{F}_{q^4}$.

If $\frac{y_1^2+x_1^2+b''^2}{x_1b''}\neq0$, then we have the following by Lemma \ref{lem4}: if $\mathbf{Tr}_2^{q^2}\left(\frac{x_1b''}{y_1^2+x_1^2+b''^2}\right)=0,$ then Eq. (\ref{eq10}) has no solution in $\mu_{q^2+1}\backslash\{1\}$; if $\mathbf{Tr}_2^{q^2}\left(\frac{x_1b''}{y_1^2+x_1^2+b''^2}\right)=1,$ then Eq. (\ref{eq10}) has two different solutions in $\mu_{q^2+1}\backslash\{1\}$.


In the following, we discuss how many pairs of $(x_1,y_1)\in \mathbb{F}_q^*\times \mathbb{F}_q^*$ such that $\mathbf{Tr}_2^{q^2}\left(\frac{x_1b''}{y_1^2+x_1^2+b''^2}\right)$ $=1$. Set $z_1=x_1+y_1$. Then, we have
$1=\mathbf{Tr}_2^{q^2}\left(\frac{x_1b''}{z_1^2+b''^2}\right)=\mathbf{Tr}_2^{q}\left(\mathbf{Tr}_q^{q^2}\left(\frac{x_1b''}{z_1^2+b''^2}\right)\right)=
\mathbf{Tr}_2^{q}\left(x_1A(z_1)\right)$, where $A(z_1)=\frac{(b''+b''^{-1})(z_1^2+1)}{(z_1^2+b''^{2})(z_1^2+b''^{-2})}$. Note that $z_1\neq 1$ and $b''\neq 1$.
Let $N=|\{(x_1,z_1)\in \mathbb{F}_q^*\times \mathbb{F}_{q}: x_1\neq z_1, z_1\neq 1, \mathbf{Tr}_2^{q}\left(x_1A(z_1)\right)=1\}|$. Then
\begin{eqnarray*}
  N &=& |\{(x_1,0): x_1\in \mathbb{F}_q^*, \mathbf{Tr}_2^{q}\left(x_1A(0)\right)=1\}\cup \\&& \{(x_1,z_1)\in \mathbb{F}_q^*\times \mathbb{F}_{q}^*: z_1\neq 1, x_1\neq z_1, \mathbf{Tr}_2^{q}\left(x_1A(z_1)\right)=1\}| \\
  &=&|\{(x_1,0): x_1\in \mathbb{F}_q^*, \mathbf{Tr}_2^{q}\left(x_1A(0)\right)=1\}|+\\&& |\{(x_1,z_1)\in \mathbb{F}_q^*\times \mathbb{F}_{q}^*: z_1\neq 1, x_1\neq z_1, \mathbf{Tr}_2^{q}\left(x_1A(z_1)\right)=1\}|\\
   &=& \frac{q}{2}+(q-2)\cdot \frac{q}{2}=\frac{q}{2}(q-1).
\end{eqnarray*} 

Hence, for $b'\in \mu_{q+1}\backslash \{1\}$, Eq. (\ref{eq8}) has $2N=2(\frac{q}{2}(q-1))=q^2-q$ solutions in $\mathbb{F}_{q^4}$.

\textbf{(2) Case 2: $\Delta\neq 0$, i.e., $x_2\neq y_2$}

According to Cramer's Rule, and combining with Eq. (\ref{eq7}), we have

\begin{equation}\label{eq11}
 \begin{aligned}
 \left\{
        \begin{array}{ll}
        x_1x_3=\frac{\begin{array}{|cc|}
   1 &    y_2     \\
    b'  &  y_2^{-1}   \\
\end{array}}{\begin{array}{|cc|}
   x_2 &    y_2      \\
    x_2^{-1}  &  y_2^{-1}   \\
\end{array}}=\frac{y_2b'+y_2^{-1}}{x_2y_2^{-1}+y_2x_2^{-1}}=:S;\\
        y_1y_3=\frac{\begin{array}{|cc|}
   x_2 &    1      \\
    x_2^{-1}  &  b'   \\
\end{array}}{\begin{array}{|cc|}
   x_2 &    y_2      \\
    x_2^{-1}  &  y_2^{-1}   \\
\end{array}}=\frac{x_2b'+x_2^{-1}}{x_2y_2^{-1}+y_2x_2^{-1}}=:T.\\
        \end{array}
  \right.
  \end{aligned}
\end{equation}

When $S=0$ or $T=0$, we obtain that Eq. (\ref{eq11}) has no solution in $\mu_{q-1}$.

When $S\neq0$ and $T\neq0$, by Lemma \ref{lem2}, we obtain $x_1x_3=x_1\cdot1\cdot x_3=S^{n_1}\cdot S^{n_2} \cdot S^{n_3}$ and $y_1y_3=y_1\cdot 1\cdot y_3=T^{n_1}\cdot T^{n_2} \cdot T^{n_3}$, where $n_1=\frac{1+q^2}{2}\cdot \frac{1+q}{2}, n_2=\frac{1+q^2}{2}\cdot \frac{1-q}{2}$ and $n_3=\frac{1-q^2}{2}$. Hence, we have the following
\begin{equation*}
 \begin{aligned}
 \left\{
        \begin{array}{ll}
x_1=S^{n_1};\\
        1=S^{n_2};\\
        x_3=S^{n_3};
        \end{array}
  \right.
  \end{aligned}{\rm and}~\begin{aligned}
 \left\{
        \begin{array}{ll}
y_1=T^{n_1};\\
        1=T^{n_2};\\
        y_3=T^{n_3}.
        \end{array}
  \right.
  \end{aligned}
\end{equation*}
Note that $x_1,y_1, x_3, y_3$ are determined by $x_2, y_2$. Hence, we just have to solve the system of equations with respect to $x_2$ and $y_2$:
\begin{equation}\label{eq12}
 \begin{aligned}
 \left\{
        \begin{array}{ll}
        S^{n_2}=1;\\
        T^{n_2}=1.\\
        \end{array}
  \right.
  \end{aligned}
\end{equation}
Due to $(x_2y_2^{-1}+y_2x_2^{-1})^q=y_2x_2^{-1}+x_2y_2^{-1}$, we have $x_2y_2^{-1}+y_2x_2^{-1}\in \mathbb{F}_q^*$. Hence,
\begin{eqnarray*}
  && S^{n_2}=\left(\frac{y_2b'+y_2^{-1}}{x_2y_2^{-1}+y_2x_2^{-1}}\right)^{\frac{1+q^2}{2}\cdot \frac{1-q}{2}}=1 \\
 &\Leftrightarrow& (y_2b'+y_2^{-1})^{(1+q^2)\cdot (1-q)}=(x_2y_2^{-1}+y_2x_2^{-1})^{(1+q^2)\cdot (1-q)}=1\\
 &\Leftrightarrow& (y_2^{-2}+b'^{q^2+1}y_2^2+b'+b'^{q^2})^{1-q}=1.
\end{eqnarray*}
Set $\mathfrak{n}=\mathbf{N}_{q^2}^{q^4}(b')=b'^{q^2+1}\in \mathbb{F}_{q^2}, \mathfrak{t}=\mathbf{Tr}_{q^2}^{q^4}(b')=b'+b'^{q^2}\in \mathbb{F}_{q^2}.$ Then we have
\begin{eqnarray*}
  && y_2^{-2}+\mathfrak{n}y_2^2+\mathfrak{t}=(y_2^{-2}+\mathfrak{n}y_2^2+\mathfrak{t})^q=y_2^{2}+\mathfrak{n}^qy_2^{-2}+\mathfrak{t}^q \\
 &\Leftrightarrow& (\mathfrak{n}+1)y_2^2+(\mathfrak{t}+\mathfrak{t}^q)+(\mathfrak{n}^q+1)y_2^{-2}=0\\
 &\Leftrightarrow& (\mathfrak{n}+1)y_2^4+(\mathfrak{t}+\mathfrak{t}^q)y_2^{2}+(\mathfrak{n}^q+1)=0.
\end{eqnarray*}
Set $z=y_2^2\in \mu_{q+1}$. Then we have
\begin{equation}\label{eq13}
(\mathfrak{n}+1)z^2+(\mathfrak{t}+\mathfrak{t}^q)z+(\mathfrak{n}^q+1)=0.
\end{equation}
Similarly, from $T^{n_2}=1$, we can also get Eq. (\ref{eq13}). Hence, $x_2^2,y_2^2$ are two different solutions to Eq. (\ref{eq13}).
Next, let's discuss the rest of the proof in four cases.

\textcircled{1} When $\mathfrak{n}+1=0$ and $\mathfrak{t}+\mathfrak{t}^q=0$, i.e., $b'=1$, we obtain that Eq. (\ref{eq13}) has $q+1$ different solutions in $\mathbb{F}_{q^2}$. According to Lemma \ref{lem2}, we know that $x_1x_3$ has the decomposition in Eq. (\ref{eq11}) when $S\neq 0$, i.e., $\frac{y_2b'+y_2^{-1}}{x_2y_2^{-1}+y_2x_2^{-1}}\neq 0$. By $b'=1$ and $y_2b'+y_2^{-1}\neq 0$, we obtain $y_2\neq 1.$ Similarly, we also obtain $x_2\neq 1$. Hence, $x_2^2, y_2^2\in \mu_{q+1}\backslash\{1\}$ and $x_2^2\neq y_2^2$.
In this case, there are $q(q-1)=q^2-q$  choices for $x_2^2,y_2^2$.

\textcircled{2} When $\mathfrak{n}+1=0$ and $\mathfrak{t}+\mathfrak{t}^q\neq0$, we obtain $z=0$. In this case, there is no solution for $x_2^2,y_2^2$.

\textcircled{3} When $\mathfrak{n}+1\neq0$ and $\mathfrak{t}+\mathfrak{t}^q=0$, we have $(\mathfrak{n}+1)z^2+\mathfrak{n}^q+1=0$, which has only a unique solution for $z$. Hence, there is no solution for $x_2^2,y_2^2$.

\textcircled{4} When $\mathfrak{n}+1\neq0$ and $\mathfrak{t}+\mathfrak{t}^q\neq0$, we let $\mathfrak{n}'=\mathfrak{n}+1\in \mathbb{F}_{q^2}^*$ and $\mathfrak{t}'=\mathfrak{t}+\mathfrak{t}^q\in \mathbb{F}_{q}^*$. Hence, Eq. (\ref{eq13}) is equivalent to
\begin{eqnarray}\label{eq14}
  &&\mathfrak{n}'z^2+\mathfrak{t}'z+\mathfrak{n}'^q=0 \notag\\
 &\Leftrightarrow& z^2+\frac{\mathfrak{t}'}{\mathfrak{n}'}z+\mathfrak{n}'^{q-1}=0\notag\\
 &\Leftrightarrow& \left(\frac{\mathfrak{n}'}{\mathfrak{t}'}z\right)^2+\frac{\mathfrak{n}'}{\mathfrak{t}'}z+\frac{\mathfrak{n}'^{q+1}}{\mathfrak{t}'^2}=0.
\end{eqnarray}Since $\frac{\mathfrak{n}'^{q+1}}{\mathfrak{t}'^2}\in \mathbb{F}_{q}$, we have $\mathbf{Tr}_2^{q^2}\left(\frac{\mathfrak{n}'^{q+1}}{\mathfrak{t}'^2}\right)=\mathbf{Tr}_2^{q}\left(\mathbf{Tr}_q^{q^2}\left(\frac{\mathfrak{n}'^{q+1}}{\mathfrak{t}'^2}\right)\right)=\mathbf{Tr}_2^{q}(0)=0$. By Lemma \ref{lem3}, we obtain that Eq. (\ref{eq14})  has two different solutions in $\mathbb{F}_{q^2}$.

Without loss of generality, let's assume that $z_1,z_2\in \mathbb{F}_{q^2}$ are the two different solutions to the equation $\mathfrak{n}'z^2+\mathfrak{t}'z+\mathfrak{n}'^q=0$. According to Viete theorem, we have $z_1\cdot z_2=\mathfrak{n}'^{q-1}\in \mu_{q+1}$, which  implies that $z_1\neq z_2\in \mu_{q+1}$ if and only if $\frac{z_1}{z_2}\in \mu_{q+1}\backslash \{1\}.$ Hence, there are two choices for $x_2^2,y_2^2$.

It follows from $\mathfrak{t}'\neq 0$ that $b''\neq 0$, i.e., $b\neq 0$. Next, let's talk about how many $b\in \mathbb{F}_{q^4}^*$ such that $\frac{z_1}{z_2}\in \mu_{q+1}\backslash \{1\}$. In addition, we have $\frac{z_1}{z_2}+\frac{1}{\frac{z_1}{z_2}}=\frac{z_1^2+z_2^2}{z_1z_2}=\frac{(z_1+z_2)^2}{z_1z_2}=\frac{(\frac{\mathfrak{t}'}{\mathfrak{n}'})^2}{\frac{\mathfrak{n}'^q}{\mathfrak{n}'}}=\frac{\mathfrak{t}'^2}{\mathfrak{n}'^{q+1}}\in \mathbb{F}_q^*$. From Lemma \ref{lem4}, we obtain that $\frac{z_1}{z_2}\in \mu_{q+1}\backslash \{1\}$ if and only if $\mathbf{Tr}_2^{q}\left(\frac{\mathfrak{n}'^{q+1}}{\mathfrak{t}'^2}\right)=1$, i.e., $\mathbf{Tr}_2^{q}\left(\frac{\mathfrak{n}'^{\frac{q+1}{2}}}{\mathfrak{t}'}\right)=1$.
Set $$\Lambda =\left\{b\in \mathbb{F}_{q^4}^*: \mathfrak{t}'\neq 0~{\rm and}~\mathbf{Tr}_{2}^{q}\left(\frac{\mathfrak{n}'^{\frac{q+1}{2}}}{\mathfrak{t}'}\right)=1, {\rm where}~\mathfrak{t}'=\mathfrak{t}+\mathfrak{t}^q, \mathfrak{n}'=\mathfrak{n}+1\right\}.$$ It follows from Lemma \ref{eq7} that $|\Lambda|=\frac{1}{2}q^3(q-1)$.

To sum up, we obtain
\begin{equation*}
 N_b(\Omega^*)=\begin{aligned}
 \left\{
        \begin{array}{ll}
        q^2-2,~b=1 ;\\
        q^2-q,~b\in \mu_{q+1}\backslash \{1\};\\
        2,~~~~~~~b\in \Lambda ;\\
        0,~~~~~~~{\rm otherwise}.\\
        \end{array}
  \right.
  \end{aligned}
\end{equation*}
Hence, \begin{equation*}
 N_b(\Omega)=N_b(\{(1,0),(0,1)\})+N_b(\Omega^*)=\begin{aligned}
 \left\{
        \begin{array}{ll}
        q^2,~~~~~~b=1 ;\\
        q^2-q,~b\in \mu_{q+1}\backslash \{1\};\\
        2,~~~~~~~~b\in \Lambda ;\\
        0,~~~~~~~~{\rm otherwise}.\\
        \end{array}
  \right.
  \end{aligned}
\end{equation*}

\section{Concluding remarks}
In this paper, we presented a new method to solve the equation $x^d+(x+1)^d=b$ in $\mathbb{F}_{q^4}$, where $d=q^3+q^2+q-1$. Our method is interesting and promising, and is different from Kim et. al. \cite{KM} and Tu et. al. \cite{TWZTJ}. On the one hand, our method differs from the one adopted by Kim and Mesnager mainly in that we can not only give the cardinality  of solutions of the equation but also directly determine the number of the set of $b$'s for
which the equation has 2 solutions. On the other hand,  our method differs from the one used by Tu et. al. mainly in that our algebraic method could be helpful to
solve the equation $x^d+(x+1)^d=b$  over $\mathbb{F}_{2^{kn}}$ for other
values of $k$ where $d=\sum\limits_{i=1}^{k-1}2^{in}-1$, which we are working on. In addition, we are using our methods to determine the $c$-differential spectrum of the power function $x^d$ over $\mathbb{F}_{q^4}$ where $d=q^3+q^2+q-1$.

As an immediate consequence of our results, we proved Conjecture 27 and directly determined the differential spectrum of this power function $x^d$ using methods different from those in \cite{KM, TWZTJ}.



\begin{thebibliography}{1}
 \bibitem{B} L. Budaghyan. The equivalence of almost bent and almost perfect nonlinear functions and their generalization. Ph.D. dissertation, Dept. Math., Otto-von-Guericke-Univ., Magdeburg, Germany, 2005.
\bibitem{BCC} C. Blondeau, A. Canteaut and P. Charpin. Differential properties of power functions. Int. J. Inf. Coding Theory, vol. 1, no. 2, pp. 149-170, 2010.
\bibitem{BCC1} C. Blondeau, A. Canteaut and P. Charpin. Differential properties of $x^{2^t-1}$. IEEE Trans. Inf. Theory, vol. 57, no. 12, pp. 8127-8137, 2011.
\bibitem{BCCDK} L. Budaghyan, M. Calderini, C. Carlet, D. Davidova and N. Kaleyski. On two fundamental problems on APN power functions. IEEE Trans. Inf. Theory, vol. 68, no. 5, pp. 3389-3403, 2022.
\bibitem{BS} E. Biham and A. Shamir. Differential cryptanalysis of DES-like cryptosystems. J. Cryptol., vol. 4, no. 1, pp. 3-72, 1991.
\bibitem{D} H. Dobbertin. Almost perfect nonlinear power functions on $GF(2^n)$: A new case for $n$ divisible by $5$. Finite Fields Appl., D. Jungnickel, H. Niederreiter, Eds. Heidelberg, Germany:
Springer, pp. 113-121, 2001.
\bibitem{D1} H. Dobbertin. Another proof of Kasami¡¯s theorem. Des., Codes Cryptogr., vol. 17, no. 1, pp. 177-180, 1999.
\bibitem{DFHR} H. Dobbertin, P. Felke, T. Helleseth and P. Rosendahl. Niho type cross-correlation
functions via Dickson polynomials and Kloosterman sums. IEEE Trans. Inf. Theory, vol. 52,
no. 2, pp. 613-627, 2006.
 \bibitem{HX} H. Hollmann and Q. Xiang. A proof of the Welch and Niho conjectures on cross-correlations of binary $m$-sequences. Finite Fields Appl., vol. 7, no. 2, pp. 253-286, 2001.
\bibitem{JW} H. Janwa and R. M. Wilson. Hyperplane sections of fermat varieties in $P^3$ in char.2 and some applications to cyclic codes. In Proc. Int. Symp. Appl. Algebra, Algebr. Algorithms, Error-Correcting Codes. Heidelberg, Germany: Springer, 1993.
\bibitem{K} T. Kasami. The weight enumerators for several classes of subcodes
of the 2nd order binary Reed-M\"{u}ller codes. Inf. Control, vol. 18, pp. 369-394, 1971.


\bibitem{KM} K. H. Kim and S. Mesnager. Solving $X^{2^{3n}+2^{2n}+2^n-1}+(X+1)^{2^{3n}+2^{2n}+2^n-1}=b$
in $\mathbb{F}_{2^{4n}}$ and an alternative proof of a conjecture on the differential spectrum of the related monomial functions. Finite Fields Appl., vol. 83, pp. 102086, 2022.
\bibitem{LN} R. Lidl and H. Niederreiter. \emph{Finite Fields}. Cambridge, U.K.: Cambridge Univ. Press, 1997.
\bibitem{LCXM} G. Luo, X. Cao, S. Xu and J. Mi. Binary linear codes with two or three weights from
niho exponents. Cryptogr. Commun., vol. 10, pp. 301-318, 2018.
\bibitem{N} K. Nyberg. Differnetially uniform mappings for cryptography.  In Proc. Workshop Theory Appl. Cryptograph. Techn. Berlin, Germany: Springer, pp. 55-64, 1994.

\bibitem{TWZTJ} Z. Tu, N. Li, Y. Wu, X. Zeng, X. Tang and Y. Jiang. On the differential spectrum and the APcN property of a class of power functions over finite fields. IEEE Trans. Inf. Theory, vol. 69, no. 1, pp. 582-587, 2023.
\bibitem{XY} M. Xiong and H. Yan. A note on the differential spectrum of a differentially $4$-uniform power function. Finite Fields Appl., vol. 48, pp. 117-125, 2017.
 \end{thebibliography}
\end{document}